                \newcommand {\bt}  {\beta}
      \newcommand {\del}  {\delta}          
              \newcommand {\ve}   {\varepsilon}
                 \newcommand {\vphi} {\varphi}
      \newcommand {\lam}  {\lambda}
                \newcommand {\Om}  {\Omega}
      \newcommand {\pl}   {\partial}        
      \newcommand {\Sig}    {\Sigma}       
           \newcommand {\UUU}  {{\cal U}}
      \newcommand {\RRR}  {{\mathbb R}}
                            \newcommand {\ttt}  {t}             \newcommand {\area}  {s}
     \newcommand {\beq}  {\begin{equation}}
      \newcommand {\eeq}  {\end{equation}}
      \newtheorem{theorem}{Theorem}
      \newtheorem{utv}{Proposition}
      \newtheorem{zam}{Remark}
\title{Local structure of convex surfaces\\ near regular and conical points}
\author{Alexander Plakhov\thanks{Center for R{\&}D in Mathematics and Applications, Department of Mathematics, University of Aveiro, Portugal and Institute for Information Transmission Problems, Moscow, Russia, plakhov@ua.pt}}
\begin{document}
\maketitle

\begin{abstract}
Consider a point on a convex surface in $\mathbb{R}^d$, $d \ge 2$ and a plane of support $\Pi$ to the surface at this point. Draw a plane parallel to $\Pi$ cutting a part of the surface. We study the limiting behavior of this part of surface when the plane approaches the point, being always parallel to $\Pi$. More precisely, we study the limiting behavior of the normalized surface area measure in $S^{d-1}$ induced by this part of surface. In this paper we consider two cases: (a) when the point is regular and (b) when it is singular conical, that is, the tangent cone at the point does not contain straight lines. In the case (a) the limit is the atom located at the outward normal vector to $\Pi$, and in the case (b) the limit is equal to the measure induced by the part of the tangent cone cut off by a plane.
\end{abstract}

\begin{quote}
{\small {\bf Mathematics subject classifications:} 52A20, 26B25}
\end{quote}

\begin{quote}
{\small {\bf Key words and phrases:}
convex surfaces, surface area measure of a convex body, Newton's problem of minimal resistance}
\end{quote}

\section{Introduction}

Consider a convex compact set $C$ with nonempty interior in Euclidean space $\RRR^d$,\, $d \ge 2$. Let $r_0 \in \pl C$ be a point on its boundary, and let $\Pi$ be a plane of support to $C$ at $r_0$. Consider the part of the boundary $\pl C$ containing $r_0$ and bounded by a plane parallel to $\Pi$. We are interested in studying the limiting properties of this part of boundary when the bounding plane approaches $\Pi.$

In what follows, a convex compact set with nonempty interior will be called a {\it convex body}.

The point $r_0 \in \pl C$ is called {\it regular}, if the plane of support at this point is unique, and {\it singular} otherwise. It is well known that regular points form a full-measure set in $\pl C$.

Let $e$ denote the outward unit normal vector to $\Pi$. Take $t > 0$, and let $\Pi_t$ be the plane parallel to $\Pi$ at the distance $t$ from it, on the side opposite to the normal vector. Thus, the plane $\Pi = \Pi_0$ is given by the equation $\langle {r} - {r}_0,\, e \rangle = 0$ and $\Pi_t$, by the equation $\langle {r} - {r}_0,\, e \rangle = - t$. The body $C$ is contained in the closed half-space $ \{ r : \langle r - r_0,\, e \rangle \le 0 \}$. Here and in what follows, $\langle \cdot \,,\cdot \rangle$ means the scalar product.

Consider the convex body
$$
C_t = C \cap \{ {r} : \langle {r} - {r}_0,\, e \rangle \ge -t \}.
$$
In other words, $C_t$ is the part of $C$ cut off by the plane $\Pi_t$. The boundary of $C_t$ is the union of the convex set of codimension 1
\beq\label{Btt}
B_t = C \cap \{ {r} : \langle {r} - {r}_0,\, e \rangle = -t \}
\eeq
and the convex surface
\beq\label{St}
S_t = \pl C \cap \{ {r} : \langle {r} - {r}_0,\, e \rangle \ge -t \};
\eeq
thus, $\pl C_t = B_t \cup S_t$.

In what follows, we will denote as $|\mathcal{A}|_m$ the $m$-dimensional Hausdorff measure of the Borel set $\mathcal{A} \subset \RRR^d$. By default, $|\cdot|$ means $|\cdot|_{d-1}$.

Let $n_r$ denote the outward unit normal to $C$ at a regular point $r \in \pl C$, and let $S$ be a Borel subset of $\pl C$. {\it The surface area measure induced by} $S$ is the Borel measure $\nu_S$ defined in $S^{d-1}$ satisfying
$$
\nu_{S}(\mathcal{A}) := |\{ r \in S : n_{r} \in \mathcal{A} \}|
$$
for any Borel subset $\mathcal{A} \subset S^{d-1}$. In the case when $S$ coincides with $\pl C$, we obtain the well-known measure $\nu_{\pl C}$ called the {\it surface area measure of the convex body} $C$. For this measure the following well-known relation takes place:
\beq\label{center}
\int_{S^{d-1}} n\, \nu_{\pl C}(dn) = \vec 0.
\eeq

Denote by $\nu_t$ the normalized measure induced by the surface $S_t$; more precisely,
$$
\nu_t := \frac{1}{|B_t|}\, \nu_{S_t}.
$$
That is, for any Borel set $\mathcal{A} \subset S^{d-1}$ holds
$$
\nu_t(\mathcal{A}) = \frac{1}{|B_t|} |\{ {r} \in S_t : n_{r} \in \mathcal{A} \}|.
$$

The surface area measure of $\pl C_t$ equals $\nu_{\pl C_t} = |B_t| \del_{-e} + |B_t| \nu_t$, hence
$$
\int_{S^{d-1}} n\, d\nu_{\pl C_t}(n) = |B_t| (-e + \int_{S^{d-1}} n\, \nu_t(dn)).
$$
Here and in what follows, $\del_{e}$ means the unit atom supported at $e$. Applying formula \eqref{center} to $\pl C_t$, one obtains
\beq\label{centt}
\int_{S^{d-1}} n\, \nu_t(dn) = e.
\eeq

We say that $\nu_t$ {\it weakly converges} to $\nu_*$ as $t \to 0$, and denote $\lim_{t \to 0}\nu_t = \nu_*$, if for any continuous function $f$ on $S^{d-1}$ holds
$$
\lim_{t\to0} \int_{S^{d-1}} f(n)\, \nu_t(dn) = \int_{S^{d-1}} f(n)\, \nu_*(dn).
$$
Similarly, we say that $\nu_*$ is a {\it weak partial limit} of the measure $\nu_t$, if there exists a sequence of positive numbers $t_i,\, i \in \mathbb{N}$ converging to 0 such that for any continuous function $f$ on $S^{d-1}$ holds
$$
\lim_{i\to\infty} \int_{S^{d-1}} f(n)\, \nu_{t_i}(dn) = \int_{S^{d-1}} f(n)\, \nu_*(dn).
$$

In this article we are going to study the limiting properties of the measure $\nu_t$ as $t \to 0$.

One such property is derived immediately. Let $\nu_*$ be a weak limit or a weak partial limit of $\nu_t$. Passing to the limit $t \to 0$ or to the limit $t_i \to 0$ in formula \eqref{centt}, one obtains
\beq\label{cent}
\int_{S^{d-1}} n\, \nu_*(dn) = e.
\eeq

The {\it tangent cone} to $C$ at $r_0 \in \pl C$ is the closure of the union of all rays with vertex at $r_0$ that intersect $C \setminus r_0$. Equivalently, the tangent cone at $r_0$ is the smallest closed cone with the vertex at $r_0$ that contains $C$; see Fig.~\ref{figK}.
         \begin{figure}[h]
\centering
\hspace*{6mm}
\includegraphics[scale=0.2]{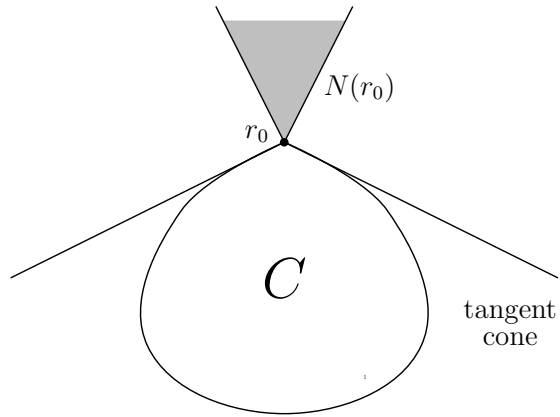} 
\caption{The tangent cone and the normal cone to a convex body $C$.}
\label{figK}
\end{figure}
If the tangent cone at $r_0$ is a half-space then the point $r_0$ is regular, and vice versa.

The {\it normal cone} to $C$ at $r_0$ is the union of all rays with vertex at $r_0$ whose director vector is the outward normal to a plane of support at $r_0$. It is denoted as $N(r_0)$. An equivalent definition is the following: the normal cone at $r_0$ is the set of points $r$ that satisfy $\langle r - r_0,\, r' - r_0 \rangle \le 0$ for all $r' \in C$.  The normal cone to a convex body does not contain straight lines. Both tangent and normal cones are, of course, convex sets.

If the dimension of $N(r_0)$ equals $d$ (equivalently, if the tangent cone does not contain straight lines) then $r_0$ is called a {\it conical point} of $C$. If the dimension of $N(r_0)$ equals 1 then $r_0$ is regular, and vice versa. In the intermediate case, that is, if the dimension of $N(r_0)$ is greater than 1 but smaller than $d$,\, $r_0$ is called a {\it ridge point}. This notation goes back to Pogorelov \cite{Pogorelov}.

The motivation for this study comes, to a great extent, from extremal problems in classes of convex bodies and, in particular, from Newton's problem of least resistance for convex bodies \cite{BK}. It is natural to try to develop a geometric method of small variation of convex bodies for such problems, and perhaps the simplest way would be cutting a small part of the body by a plane. This method proved itself to be effective in the case of Newton's problem. Let us describe this problem in some detains.

The problem in a class of radially symmetric bodies was first stated and solved by Newton himself in 1687 in \cite{N}. The more general version of the problem was posed by Buttazzo and Kawohl in 1993 in \cite{BK}.  This general problem can be formulated in the functional form as follows:
\begin{quote}
Find the smallest value of the functional
\beq\label{problN}
\int\!\!\!\int_\Om \frac{1}{1 + |\nabla u(x,y)|^2}\, dx dy
\eeq
in the class of convex functions $u: \Om \to \RRR$ satisfying $0 \le u \le M$, where $\Om \subset \RRR^2$ is a planar convex body and $M > 0$.
\end{quote}
Physical meaning of this problem is as follows: find the optimal streamlined shape of a convex body moving downwards through an extremely rarefied medium, provided that the body-particle collisions are perfectly elastic.

Problem \eqref{problN} (along with its further generalizations) has been studied in various papers including \cite{BelloniKawohl,BrFK,BFK,BG97,CL1,LO,LP1,ARMA,PT_thermal,W}, but is not solved completely until now.

It was conjectured in 1995 in \cite{BFK} that the slope of the graph of an optimal function near the zero level set $L_0 = \{ (x,y): u(x,y) = 0 \}$ equals 1. This conjecture was numerically disproved by Wachsmuth\footnote{ Personal communication.} in the case when $L_0$ has empty interior, and therefore, is a line segment. Moreover, numerical simulation shows that the infimum of $|\nabla u|$ in the complement of $L_0$ is strictly greater than 1.

On the other hand, this conjecture was proved by the author in \cite{MMO} in the case when $L_0$ has nonempty interior. More precisely, it was proved that if $u$ minimizes functional \eqref{problN} then for almost all $(x, y) \in \pl L_0$ holds
$$
\lim_{(x',y')\, (\not\in L_0)\to(x, y)}|\nabla u(x',y')| = 1.
$$
The proof is based on the results concerning local properties of convex surfaces near ridge points in the case $d = 3$. These results were formulated, with the proofs being briefly outlined, in \cite{MMO}.

\begin{zam}\label{Zcase2D}
The limiting behavior of $\nu_t$ in the case $d=2$ is quite simple. In this case the tangent cone is an angle, which degenerates to a half-plane if the point is regular. We will call it the {\rm tangent angle}. Let the tangent angle to $C \subset \RRR^2$ at $r_0$ be given by
$$
\langle {r} - {r}_0,\, e_1 \rangle \le 0, \quad \langle {r} - {r}_0,\, e_2 \rangle \le 0, \quad |e_1| = 1, \ |e_2| = 1,
$$
and $e$ be given by
$$
e = \lam_1 e_1 + \lam_2 e_2, \quad \lam_1 \ge 0, \ \lam_2 \ge 0, \quad |e| = 1.
$$
Thus, $e_1$ and $e_2$ are the outward unit normals to the sides of the angle, and $e$ is the outward unit normal to a line of support at $r_0$. Then the limiting measure is the sum of two atoms
$$
\lim_{t\to 0} \nu_t = \lam_1 \del_{e_1} + \lam_2 \del_{e_2}.
$$
The proof of this relation is simple and is left to the reader.

Note that if the point $r_0$ is regular then $e_1 = e_2 = e$. It may also happen that the point is singular, that is, $e_1 \ne e_2$, and $e$ coincides with one of the vectors $e_1$ and $e_2$. In both cases the limiting measure is an atom,
$$
\lim_{t\to 0} \nu_t = \del_{e}.
$$
\end{zam}

The limiting behavior of $\nu_t$ is different for different kinds of points.

(a) If the point $r_0$ is regular then the limiting measure is an atom.

(b) If $r$ is a conical point then the limiting measure coincides with the measure induced by the part of the boundary of the tangent cone cut off by a plane $\Pi_t$, $t = \sigma$ (note that all the induced measures with $t > 0$ are proportional).

(c) The case of ridge points is the most interesting. In this case the limiting measure may not exist, and the characterization of all possible partial limits is a difficult task.

Still, the study is nontrivial also in the cases (a) and (b). In this paper we restrict ourselves to these cases, while the case (c) is postponed to the future. The main results of the paper are contained in the following Theorems \ref{t1} and \ref{t2}.

\begin{theorem}\label{t1}
If $r_0$ is a regular point of $\pl C$ then
\beq\label{limDel}
\lim_{t\to 0} \nu_t = \del_{e}.
\eeq
\end{theorem}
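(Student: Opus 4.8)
The plan is to reduce the entire statement to the single asymptotic relation
$$
m_t := \nu_t(S^{d-1}) = \frac{|S_t|}{|B_t|}\ \longrightarrow\ 1 \qquad (t\to0),
$$
and then to extract $\lim_{t\to0}\nu_t=\del_{e}$ from it together with the barycentre identity \eqref{centt}. Indeed, taking the $e$-component of \eqref{centt} gives $\int_{S^{d-1}}\langle n,e\rangle\,\nu_t(dn)=1$, so that $\int_{S^{d-1}}(1-\langle n,e\rangle)\,\nu_t(dn)=m_t-1$. The integrand here is nonnegative and vanishes only at $n=e$; hence if $m_t\to1$ the mass of $\nu_t$ carried outside any fixed neighbourhood of $e$ tends to $0$, while the total mass $m_t$ tends to $1$. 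A routine test-function argument — for continuous $f$, write $\int f\,\nu_t-f(e)=\int(f-f(e))\,\nu_t+f(e)(m_t-1)$ and split the first integral over a small neighbourhood of $e$ (where $|f-f(e)|$ is small) and its complement (where $|f-f(e)|\le2\|f\|_\infty$ and the mass vanishes) — then yields \eqref{limDel}. So the theorem is equivalent to $m_t\to1$.

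The lower bound $m_t\ge1$ is immediate: orthogonal projection onto $\Pi$ does not increase area, and since $C_t$ has flat bottom $B_t$, the projection of $S_t$ contains that of $B_t$, whence $|S_t|\ge|B_t|$. The substance is the matching upper bound $\limsup_{t\to0}m_t\le1$, i.e. that the area excess of $S_t$ over $B_t$ is negligible. I would obtain it by enclosing the cap in a cylinder: in coordinates where $\langle r-r_0,e\rangle$ is the last coordinate, let $Q\subset\Pi$ be the orthogonal projection of $C_t$, so that $C_t\subseteq Q\times[-t,0]$. Monotonicity of surface area under inclusion of convex bodies gives
$$
|B_t|+|S_t|=|\pl C_t|\le 2|Q|+t\,|\pl Q|_{d-2},
$$
whence $m_t\le 2\,|Q|/|B_t|-1+t\,|\pl Q|_{d-2}/|B_t|$. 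Thus it suffices to prove $|Q|/|B_t|\to1$ and $t\,|\pl Q|_{d-2}/|B_t|\to0$.

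Both facts are where the regularity of $r_0$ enters, and this is the main obstacle. Regularity means precisely that the tangent cone to $C$ at $r_0$ is the half-space $H=\{r:\langle r-r_0,e\rangle\le0\}$. Rescaling about $r_0$ by the factor $1/t$, the monotone family $\tfrac1t(C-r_0)$ increases to $H$, so the rescaled caps $\tfrac1t(C_t-r_0)$ converge to the infinite slab $H\cap\{-1\le\langle\cdot,e\rangle\le0\}$. Consequently the rescaled base $\tfrac1t(B_t-c_t)$ fills out the whole hyperplane in the limit: its inradius tends to $\infty$. Writing $\rho_t$ for the inradius of $B_t$, this says $\rho_t/t\to\infty$, and the elementary convex bound $|B_t|\ge\tfrac{1}{d-1}\,\rho_t\,|\pl B_t|_{d-2}$ then gives $t\,|\pl B_t|_{d-2}/|B_t|\le(d-1)\,t/\rho_t\to0$. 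The same blow-up shows that the bulge of the shadow over the base is of lower order, $|Q|-|B_t|=o(|B_t|)$ and $|\pl Q|_{d-2}\sim|\pl B_t|_{d-2}$, which supplies the two displayed limits and hence $m_t\to1$.

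The delicate points I expect to have to treat with care are: (i) verifying that for small $t$ the surface $S_t$ is essentially the graph of a concave function over $Q$ with upward normals, so that the flux identity $\int_{S_t}\langle n_r,e\rangle\,d|r|=|B_t|$ coming from \eqref{centt} genuinely equals the projected area, the two-sheeted ``rim'' near $\pl Q$ (where the cap is thinner than $t$) having area $O(t\,|\pl Q|_{d-2})$ and so being absorbed into the error term; and (ii) making the convergence of the rescaled caps to the slab quantitative enough to conclude $\rho_t/t\to\infty$ and $|Q|/|B_t|\to1$. The heart of the matter is item (ii): it is exactly the flatness encoded in the tangent cone being a half-space that forces the base of the cap to become asymptotically far wider than it is deep. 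This flatness is precisely what fails at conical and ridge points, where the cap stays comparable to a genuine cone and $m_t$ remains bounded away from $1$.
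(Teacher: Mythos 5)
Your argument is correct in substance but follows a genuinely different route from the paper's. The paper splits $S_t$ by normal direction: it fixes $\vphi$, shows via a coarea-type computation for the two graphs bounding $C_t$ (Proposition \ref{utv2}) that the ``steep'' part $S_{t,\vphi}$ has area $o(|B_t|)$, concludes that every weak partial limit is proportional to $\del_e$, and only at the end invokes the barycentre identity \eqref{cent} to fix the constant. You instead make the barycentre identity the engine: the single scalar limit $m_t=|S_t|/|B_t|\to1$, combined with $\int_{S^{d-1}}(1-\langle n,e\rangle)\,\nu_t(dn)=m_t-1\ge0$, already forces both concentration at $e$ and the correct total mass, and you prove $m_t\to1$ by sandwiching $|\pl C_t|$ between $2|B_t|$ (projection) and the surface area of the enclosing cylinder $Q\times[-t,0]$ (monotonicity of surface area under inclusion of convex bodies). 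Both proofs ultimately rest on the same two geometric facts: the inradius bound $\rho_t/t\to\infty$, which is where regularity enters --- you derive it from the tangent cone being a half-space, the paper from two-dimensional sections (Propositions \ref{utv3}--\ref{utv4}) --- and the perimeter-to-area estimate of Proposition \ref{utv1}. What your route buys is the elimination of the angular decomposition $S_{t,\vphi}$ and of Proposition \ref{utv2} entirely; what it costs is the extra lemma, asserted but not proved in your sketch, that the shadow $Q$ of $C_t$ lies in an $O(t)$-neighbourhood of $B_t$, so that $|Q|/|B_t|\to1$. This does hold: join any point of $C_t$ to a fixed interior point of $C$ lying below $\Pi_{t_1}$ for some fixed $t_1$, intersect the chord with $\Pi_t$ to get $Q\subseteq B_t+O(t)\,B^{d-1}$, and then use $B_t+\ve B^{d-1}\subseteq(1+\ve/\rho_t)B_t$ (dilation about the incentre) together with $\rho_t/t\to\infty$. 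You can also streamline by applying the Proposition~\ref{utv1} bound directly to $Q$, which contains the same inball, thereby skipping the claim $|\pl Q|_{d-2}\sim|\pl B_t|_{d-2}$. Your worry (i) about the flux identity is unnecessary: \eqref{centt} is exact regardless of whether $S_t$ is a graph, and your final argument never uses that structure.
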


Let $r_0$ be a conical point, $K$ be the tangent cone at $r_0$, $\hat S_\ttt$ be the part of $\pl K$ containing $r_0$ cut off by the plane $\Pi_\ttt$, and $\hat B_\ttt$ be the intersection of the cone wit the cutting plane $\Pi_\ttt$, $\ttt > 0$, that is,
 $$
 \hat S_\ttt = \pl K \cap \{ r : \langle r-r_0,\, e \rangle \ge -\ttt \}\! \quad \text{and}\! \quad \hat B_\ttt = K \cap \{ {r} : \langle {r} - {r}_0,\, e \rangle = -\ttt \}.
$$
Let $K_\ttt = K \cap  \{ r : \langle r-r_0,\, e \rangle \ge -\ttt \}$ be the part of the cone cut off by the plane $\Pi_\ttt$; its boundary is $\pl K_\ttt = \hat S_\ttt \cup \hat B_\ttt$.

All measures induced by ${\hat S_\ttt}$ are proportional, that is, the measure
$$
\nu_\star := \frac{1}{|\hat B_{\ttt}|}\, \nu_ {\hat S_{\ttt}}
$$
does not depend on $\ttt$.

\begin{theorem}\label{t2}
If $r_0$ is a conical point of $\pl C$ then
\beq\label{limDel}
\lim_{t\to 0} \nu_t = \nu_\star.
\eeq
\end{theorem}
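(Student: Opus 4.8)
\emph{The plan} is to reduce Theorem~\ref{t2} to the classical weak continuity of the surface area measure by means of a blow-up (homothety) centered at the conical point. After translating we may assume $r_0 = \vec 0$, so that $K = \overline{\bigcup_{\lam > 0}\lam C}$ is the tangent cone and $e \in N(\vec 0)$. Fix the level $\s > 0$ at which the cone is cut, and for $t \in (0,\s)$ set $\lam = \s/t$, so that $\lam \to +\infty$ as $t \to 0$. Applying the homothety $r \mapsto \lam r$ to $C_t$ turns the moving plane $\Pi_t$ into the fixed plane $\Pi_\s$: indeed $\lam C_t = \lam C \cap \{ r : \langle r, e\rangle \ge -\s \}$. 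A homothety with ratio $\lam$ preserves outward normals and multiplies $(d-1)$-dimensional Hausdorff measure by $\lam^{d-1}$; hence $\nu_{\lam S_t} = \lam^{d-1}\nu_{S_t}$ and $|\lam B_t| = \lam^{d-1}|B_t|$, which yields the scale-invariant identity
\beq
\nu_t = \frac{1}{|B_t|}\,\nu_{S_t} = \frac{1}{|\lam B_t|}\,\nu_{\lam S_t}.
\eeq
Here $\lam S_t$ and $\lam B_t$ are precisely the lateral and the flat parts of $\pl(\lam C_t)$, the latter lying in $\Pi_\s$ with constant outward normal $-e$.

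\emph{Next} I would show that the truncated, rescaled bodies converge to the truncated cone: $\lam C_t \to K_\s$ in the Hausdorff metric as $t \to 0$. Since $\vec0 \in C$, the family $\lam C$ is nondecreasing in $\lam$ and its closed union is $K$, so $\lam C \to K$ on every ball; intersecting with the fixed half-space $\{\langle r, e\rangle \ge -\s\}$ then gives $\lam C_t \to K_\s$, provided $K_\s$ is a convex body. This is where the hypothesis enters: $K_\s$ is compact exactly when the support plane $\Pi$ meets $K$ only at $r_0$, i.e. when $e$ lies in the interior of the normal cone $N(r_0)$, and I take this as the standing situation (in the degenerate case $e \in \pl N(r_0)$ the cross-section $\hat B_\s$ is unbounded, $\nu_\star$ must be understood separately, and the limit reduces to an atom as in Remark~\ref{Zcase2D}). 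Granting compactness, $K_\s$ has nonempty interior, hence is a genuine convex body, and the cross-sectional areas converge, $|\lam B_t| \to |\hat B_\s| > 0$.

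\emph{Then} I invoke the classical fact that the surface area measure of a convex body depends weakly continuously on the body in the Hausdorff metric. Applied to $\lam C_t \to K_\s$ this gives $\nu_{\pl(\lam C_t)} \to \nu_{\pl K_\s}$ weakly. Splitting each boundary into its lateral and flat parts,
\beq
\nu_{\pl(\lam C_t)} = \nu_{\lam S_t} + |\lam B_t|\,\del_{-e}, \qquad \nu_{\pl K_\s} = \nu_{\hat S_\s} + |\hat B_\s|\,\del_{-e},
\eeq
and testing against an arbitrary continuous $f$ on $S^{d-1}$ I peel off the atom at $-e$: since $|\lam B_t| \to |\hat B_\s|$, the term $f(-e)|\lam B_t|$ converges to $f(-e)|\hat B_\s|$, whence $\int f\, d\nu_{\lam S_t} \to \int f\, d\nu_{\hat S_\s}$, i.e. $\nu_{\lam S_t} \to \nu_{\hat S_\s}$ weakly. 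Dividing by the convergent positive scalars $|\lam B_t| \to |\hat B_\s|$ and using the scale-invariant identity gives, for every continuous $f$,
\beq
\int_{S^{d-1}} f\, d\nu_t = \frac{1}{|\lam B_t|}\int_{S^{d-1}} f\, d\nu_{\lam S_t} \longrightarrow \frac{1}{|\hat B_\s|}\int_{S^{d-1}} f\, d\nu_{\hat S_\s} = \int_{S^{d-1}} f\, d\nu_\star,
\eeq
which is \eqref{limDel}.

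\emph{The main obstacle} I anticipate is the Hausdorff convergence $\lam C_t \to K_\s$ together with the attendant continuity $|\lam B_t| \to |\hat B_\s|$: one must rule out mass escaping to infinity along the cutting plane and confirm that $K_\s$ is compact, which is exactly the content of the conical hypothesis (via $e \in \mathrm{int}\,N(r_0)$). Everything downstream --- the homothety bookkeeping, the peeling off of the flat-cap atom, and the passage to the normalized limit --- is then routine, resting on the scale invariance of normals under homothety and on the classical weak continuity of the surface area measure.
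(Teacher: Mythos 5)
Your proposal is correct and follows essentially the same route as the paper: blow up by the homothety of ratio $\sigma/t$, establish the Hausdorff convergence $\frac{\sigma}{t}C_t \to K_\sigma$ from the monotonicity of the rescaled family and the minimality of the tangent cone (the paper runs this argument through the cross-sections $\frac{\sigma}{t}B_t$ and a convex-hull sandwich, but the substance is the same), invoke the weak continuity of the surface area measure, and peel off the atom at $-e$. Your caveat about the degenerate case $e \in \partial N(r_0)$, in which $\hat B_\sigma$ is unbounded and $\nu_\star$ as normalized is not well defined, is a legitimate observation that the paper passes over in silence.
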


\section{Proof of Theorem \ref{t1}}\label{sec_T1}

The proof is based on several propositions.

Consider a convex set $D \subset \RRR^{d-1}$, and let $A = |D|$ be its $(d-1)$-dimensional volume, and $P = |\pl D|_{d-2}$ be the $(d-2)$-dimensional volume of its boundary.

\begin{utv}\label{utv1}
If $D$ contains a circle of radius $a$ then
\beq\label{ineqPA}
P \le \frac{d-1}{a}\, A.
\eeq
\end{utv}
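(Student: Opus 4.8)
The plan is to center the enclosed ball at the origin and to express the volume $A$ as an integral over $\pl D$ of the distance from the origin to the supporting hyperplane, then to bound that distance from below by $a$. Equality in \eqref{ineqPA} holds when $D$ is itself the ball of radius $a$, which both suggests the method and confirms that the constant $(d-1)/a$ is sharp.

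First I would assume, without loss of generality, that the ball $B_a$ of radius $a$ contained in $D$ is centered at the origin; a translation changes neither $A$ nor $P$. Since the boundary of a convex body is Lipschitz, the divergence theorem applies to the position vector field $x \mapsto x$ on $\RRR^{d-1}$, whose divergence is the constant $d-1$. Writing $d\s$ for the $(d-2)$-dimensional surface area element on $\pl D$, so that $\int_{\pl D} d\s = P$, this gives
$$
(d-1)\,A = \int_D \operatorname{div} x\, dx = \int_{\pl D} \langle x,\, n_x \rangle\, d\s,
$$
where $n_x$ is the outward unit normal at $x \in \pl D$, defined for $\s$-almost every $x$. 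The key geometric step is the lower bound $\langle x,\, n_x \rangle \ge a$ for almost every boundary point $x$. Indeed, $\langle x,\, n_x \rangle$ equals the distance from the origin to the supporting hyperplane of $D$ at $x$ (it is the value of the support function of $D$ in the direction $n_x$). Because $B_a \subseteq D$, this supporting hyperplane cannot cross the interior of $B_a$, so its distance to the center is at least $a$. Substituting this bound into the identity above yields
$$
(d-1)\,A \ge a \int_{\pl D} d\s = a\,P,
$$
which is exactly \eqref{ineqPA}.

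The one point requiring care is the use of the divergence theorem on a merely Lipschitz boundary, and I expect this to be the main (though routine) obstacle. The cleanest way around it is to prove the identity first for convex polytopes: decompose such a polytope into pyramids with common apex at the origin, one over each facet $F_i$ of $(d-2)$-volume $\s_i$, so that $A = \tfrac{1}{d-1}\sum_i h_i \s_i$ with $h_i \ge a$ the distance from the origin to the hyperplane of $F_i$. One then passes to a general convex body by approximating it in the Hausdorff metric by polytopes and invoking the continuity of both volume and surface area under such approximation.
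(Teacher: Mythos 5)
Your proof is correct and is essentially the same argument as the paper's: the identity $(d-1)A=\int_{\pl D}\langle x, n_x\rangle\, d\s$ is exactly the paper's decomposition of $D$ into infinitesimal pyramids with apex at the center of the ball, and the key bound $\langle x, n_x\rangle\ge a$ is the paper's observation that each pyramid has height at least $a$. Your polytope-approximation remark merely makes rigorous what the paper treats informally.
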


\begin{proof}
Let $d\area\subset \pl D$ be an infinitesimal element of the boundary of $D$, and denote by $p(d\area)$ its $(d-2)$-dimensional volume. Consider the pyramid with the vertex at the center $O$ of the circle and with the base $d\area$, that is, the union of line segments joining $O$ with the points of $d\area$. Let $A(d\area)$ be the element of $(d-1)$-dimensional volume of this pyramid; see Fig.~\ref{figAAA}.
         \begin{figure}[h]
\centering
\hspace*{6mm}
\includegraphics[scale=0.2]{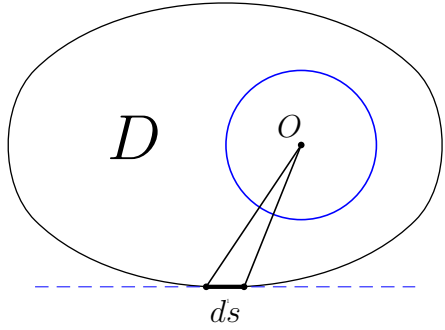} 
\caption{The convex set $D$ containing a circle of radius $a$.}
\label{figAAA}
\end{figure} 
Then we have
$$
A(d\area) \ge \frac{a}{d-1}\, p(d\area),
$$
and therefore,
$$
A = \int_{\pl D} A(d\area) \ge \frac{a}{d-1} \int_{\pl D} p(d\area) = \frac{a}{d-1}\, P.
$$
From here follows inequality \eqref{ineqPA}.
\end{proof}

Consider Euclidean space $\RRR^d$ with the coordinates $(x, z)$, $x = (x_1,\ldots, x_{d-1})$, and fix $t > 0$ and $0 < \vphi < \pi/2$.

\begin{utv}\label{utv2}
Let a convex body $C \subset \RRR^d$ be contained between the planes $z = 0$ and $z = t$,\, $t > 0$. Let $D$ be the image of $C$ under the natural projection of $\RRR^d$ on the $x$-plane, $(x,z) \mapsto x$, and let $P = |\pl D|_{d-2}$ be the $(d-2)$-dimensional volume of $\pl D$. Let a domain $\UUU \subset \pl C$ be such that the outward normal $n_r = (n_{r,1},\ldots n_{r,d-1}, n_{r,d})$ at each regular point $r \in \UUU$ satisfies $|n_{r,d}| \le \cos\vphi$. (In other words, the angles between $n_r$ for $r \in \UUU$ and the vectors $\pm(0,\ldots,0,1)$ are $\ge \vphi$.) Then
$$
|\UUU| \le \frac{2tP}{\sin\vphi}.
$$
\end{utv}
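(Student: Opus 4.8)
The plan is to control the horizontal component of the Gauss map on $\UUU$ and then estimate it by an integral-geometric averaging. Write the outward normal as $n_r=(n_r',n_{r,d})$, where $n_r'=(n_{r,1},\ldots,n_{r,d-1})\in\RRR^{d-1}$ is its horizontal part. The hypothesis $|n_{r,d}|\le\cos\vphi$ is equivalent to $|n_r'|=\sqrt{1-n_{r,d}^2}\ge\sin\vphi$, so, writing $d\s$ for the $(d-1)$-dimensional area element on $\pl C$,
\[
\sin\vphi\,|\UUU|\le\int_{\UUU}|n_r'|\,d\s ,
\]
and it suffices to prove $\int_{\UUU}|n_r'|\,d\s\le tP$ (this even sharpens the stated bound to $|\UUU|\le tP/\sin\vphi$).

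To estimate the right-hand side I would average over horizontal directions. For a unit vector $\theta\in S^{d-2}\subset\RRR^{d-1}$ put $\bar\theta=(\theta,0)\in\RRR^d$, and let $C^\theta$ and $D^\theta$ denote the orthogonal projections of $C$ onto $\bar\theta^{\perp}$ and of $D$ onto $\theta^{\perp}$, respectively. Since $\int_{S^{d-2}}|\langle v,\theta\rangle|\,d\theta=\kap\,|v|$ for all $v\in\RRR^{d-1}$, with the single constant $\kap=\int_{S^{d-2}}|\langle u,\theta\rangle|\,d\theta$ (the same for every unit vector $u$), Fubini's theorem gives
\[
\kap\int_{\UUU}|n_r'|\,d\s=\int_{S^{d-2}}\Big(\int_{\UUU}|\langle n_r,\bar\theta\rangle|\,d\s\Big)\,d\theta .
\]
For fixed $\theta$ the inner integral is at most $\int_{\pl C}|\langle n_r,\bar\theta\rangle|\,d\s=2\,|C^\theta|_{d-1}$ by Cauchy's projection formula. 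The hyperplane $\bar\theta^{\perp}$ contains the $z$-axis, and $C\subset\{0\le z\le t\}$ projects horizontally into $D$, so $C^\theta\subset D^\theta\times[0,t]$ and hence $|C^\theta|_{d-1}\le t\,|D^\theta|_{d-2}$. Applying Cauchy's formula once more, now to $D\subset\RRR^{d-1}$, yields $\int_{S^{d-2}}|D^\theta|_{d-2}\,d\theta=\tfrac{\kap}{2}\,P$; the constant $\kap$ then cancels and I obtain $\int_{\UUU}|n_r'|\,d\s\le tP$, which finishes the proof.

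The step I expect to require the most care is that $\UUU$ cannot in general be described as a pair of graphs over $D$. Indeed, if $\pl C$ contains flat vertical faces, their normals are exactly horizontal, $n_{r,d}=0$, so these faces belong to $\UUU$ yet project onto $\pl D$ rather than over the interior of $D$, while carrying positive $(d-1)$-area. The averaging argument above is chosen precisely to sidestep this difficulty: it bounds the contribution of every boundary point uniformly through the inequality $\int_{\UUU}\le\int_{\pl C}$ together with Cauchy's formula, treating vertical faces on the same footing as the rest, whereas a graph parametrization $z=u(x)$, $z=\ell(x)$ over $D$ would silently drop them. The remaining ingredients—measurability of $\UUU$, existence of $n_r$ for $\s$-almost every $r\in\pl C$, and the validity of Cauchy's formula for the convex bodies $C$ and $D$—are standard for convex bodies and need no special attention.
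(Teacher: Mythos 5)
Your proof is correct, and it takes a genuinely different route from the paper's. The paper writes $C=\{(x,z): x\in D,\ u_1(x)\le z\le u_2(x)\}$, splits $\UUU$ into its intersections with the two graphs, and bounds each piece by slicing with the level sets $L_z=\{x: u_i(x)=z\}$ and integrating $\sqrt{1+1/|\nabla u_i|^2}\le 1/\sin\vphi$ over $0\le z\le t$; this is an elementary coarea computation and yields $tP/\sin\vphi$ per graph, hence the factor $2$. Your argument replaces the graph decomposition by an integral-geometric averaging: the pointwise bound $|n_r'|\ge\sin\vphi$ on $\UUU$, the identity $\int_{S^{d-2}}|\langle v,\theta\rangle|\,d\theta=\kap|v|$, and two applications of Cauchy's projection formula (to $C$ in $\RRR^d$ and to $D$ in $\RRR^{d-1}$) combine, via $C^\theta\subset D^\theta\times[0,t]$, to give $\int_{\UUU}|n_r'|\,d\s\le tP$ and hence the sharper bound $|\UUU|\le tP/\sin\vphi$. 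Each step checks out (including the degenerate case $d=2$, where $S^0$ carries counting measure and $\kap=2$). What your approach buys, beyond the better constant, is exactly the point you flag: it treats the lateral part of $\pl C$ lying over $\pl D$ --- where the normal is horizontal and which belongs to $\UUU$ whenever it has positive area --- on the same footing as the two graphs, whereas the paper's identity $|\UUU|=|\UUU_1|+|\UUU_2|$ silently discards it. What it costs is the reliance on Cauchy's formula and a Fubini argument in place of the paper's self-contained calculus computation; for the application in Proposition~5 either version suffices, since only the order $O(tP)$ matters there.
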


\begin{proof}
The body $C$ is bounded below by the graph of a convex function, say $u_1$, and above by the graph of a concave function, say $u_2$; see Fig.~\ref{figUUU}.
         \begin{figure}[h]
\centering
\hspace*{6mm}
\includegraphics[scale=0.2]{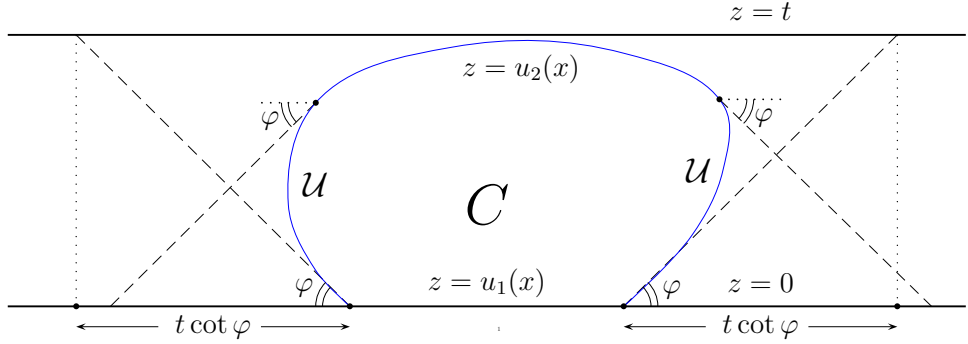} 
\caption{The body $C$ between two parallel planes $z=0$ and $z=t$ is shown. Here $\UUU$ is represented by the union of two curves bounded by the points.}
\label{figUUU}
\end{figure}
Both functions are defined on $D$. That is, we have
$$
C = \{ (x,z) :\, x \in D,\ u_1(x) \le z \le u_2(x) \}.
$$
Let $\UUU_i$ $(i = 1,\,2)$ denote the intersection of $\UUU$ with the graph of $u_i$. Clearly, if a point $(x, u_i(x))$ is regular and belongs to $\UUU_i$ then $|\nabla u_i(x)| \ge \tan\vphi$.

For $0 \le z \le t$ denote by $P_z$ the $(d-2)$-dimensional volume of the set
$$
L_z = \{ x:\, u_1(x) = z \ \text{and} \ (x, u_1(x)) \in \UUU_1 \}.
$$
One clearly has $P_z \le P$. Let $s$ be the $(d-2)$-dimensional parameter in $L_z$, and let $ds$ be the element of $(d-2)$-dimensional volume in $L_z$. Denote by $x(z,s)$ the point in $L_z$ corresponding to the parameter $s$. Then the $(d-1)$-dimensional volume of $\UUU_1$ equals
$$
|\UUU_1| = \int_0^t dz \int_{L_z} \sqrt{1 + \frac{1}{|\nabla u_1(x(z,s))|^2}}\, ds \le  \int_0^t P_z \sqrt{1 + \cot^2 \vphi}\, dz  \le \frac{tP}{\sin\vphi}.
$$
The same argument holds for $\UUU_2$. It follows that $|\UUU| = |\UUU_1| + |\UUU_2| \le 2tP/\sin\vphi$.
\end{proof}

\begin{utv}\label{utv3}
If a convex set in $\RRR^{d-1}$ contains $d-1$ mutually orthogonal line segments of length 1, then it also contains a ball of radius $c = 1/2(d-1)$.
\end{utv}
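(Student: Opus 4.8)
The plan is to reduce the statement to the elementary fact that an axis-aligned cube of side $1/(d-1)$ contains a ball of radius $1/(2(d-1)) = c$, and to produce such a cube inside the given convex set.

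First I would normalize the configuration. Mutual orthogonality and the lengths of the segments are preserved by rigid motions, so after a rotation and translation of $\RRR^{d-1}$ I may assume that the $i$-th segment is
$$
S_i = P_i + [0,1]\, e_i, \qquad i = 1, \dots, d-1,
$$
where $e_1, \dots, e_{d-1}$ is the standard orthonormal basis and $P_i$ is one of the endpoints. Denote the convex set by $D$. Since $D$ is convex and contains each $S_i$, it contains the convex hull of $\bigcup_i S_i$.

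The key observation is that $D$ then contains every point of the form $\frac{1}{d-1}\sum_{i=1}^{d-1} x_i$ with $x_i \in S_i$, because such a point is a convex combination, with equal weights $1/(d-1)$, of the points $x_i \in S_i \subset D$. Equivalently, $D$ contains the rescaled Minkowski sum $\frac{1}{d-1}(S_1 + \cdots + S_{d-1})$. I would then compute this sum explicitly: because $S_i = P_i + [0,1]\,e_i$,
$$
S_1 + \cdots + S_{d-1} = \Big(\sum_i P_i\Big) + \Big\{ \sum_i t_i e_i : t_i \in [0,1] \Big\} = \Big(\sum_i P_i\Big) + [0,1]^{d-1},
$$
a translate of the unit cube. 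Dividing by $d-1$ yields a translate of the cube $[0,\tfrac{1}{d-1}]^{d-1}$ of side $1/(d-1)$, and this cube contains the ball of radius $1/(2(d-1))$ centered at its center. Since the whole cube lies in $D$, so does the ball, which proves the claim.

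The one step that genuinely uses orthogonality, and which I would flag as the crux, is the passage from the Minkowski sum to an honest cube. For non-orthogonal segments the sum $\sum_i S_i$ would be a possibly very flat parallelepiped whose inradius could be far smaller than half its edge length; orthogonality is exactly what guarantees that the summand box is a cube, so that its inradius equals half the side. Everything else reduces to the elementary principle that an equally weighted average of points of $D$ again lies in $D$.
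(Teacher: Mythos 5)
Your proof is correct and follows essentially the same route as the paper: the paper forms the equal-weight averages $\frac{1}{d-1}\sum_i A_i^{J(i)}$ of the segments' endpoints and takes their convex hull, which is precisely the cube $\frac{1}{d-1}(S_1+\cdots+S_{d-1})$ you obtain via the Minkowski sum, and both arguments conclude by inscribing the ball of radius $1/(2(d-1))$ in that cube of side $1/(d-1)$.
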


\begin{proof}
Denote the convex set by $D$, and the segments by $[A_i^0,\, A_i^1]$, $i = 1, \ldots, d-1$. Since all points $A_i^j$ lie in $D$, each convex combination of the form $P_J = \frac{1}{d-1} \sum_{i=1}^{d-1} A_i^{J(i)}$, where $J$ denotes a map $\{ 1, \ldots, d-1 \} \mapsto \{ 0, 1 \}$, also lies in $D$. The convex combination of the set of points $P_J$ is a hypercube with the size of length $1/(d-1)$, and contains the ball of the radius $1/2(d-1)$ with the center at the hypercube's center.
\end{proof}

\begin{utv}\label{utv4}
$B_t$ contains $d - 1$ mutually orthogonal line segments of length $\bt_t$, where  $\bt_t/t \to \infty$ as $t \to 0$.
\end{utv}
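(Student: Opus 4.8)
The plan is to exploit regularity through the flatness of $\pl C$ at $r_0$. Choose coordinates with $r_0$ at the origin and $e=(0,\dots,0,1)$, and let $e_1,\dots,e_{d-1}$ be an orthonormal basis of the hyperplane $e^\perp$. The part of $\pl C$ on which $e$ is the outward normal is the graph $\zeta=g(x)$, $x\in e^\perp$, of a concave function $g$ defined on the projection $D$ of $C$ onto $e^\perp$, with $g\le 0$ and $g(0)=0$. Since $r_0$ is regular, $\Pi$ is the only supporting hyperplane at $r_0$; in particular there is no supporting hyperplane containing the direction $e$, so $0$ lies in the interior of $D$, and the uniqueness of the horizontal support forces $g$ to be differentiable at $0$ with $\nabla g(0)=0$. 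For $t$ small, a point $(x,-t)$ belongs to $C$, hence to $B_t$, exactly when $g(x)\ge -t$, since the lower boundary of $C$ lies strictly below the level $-t$ near the origin; thus near $r_0$ the set $B_t$ coincides with the superlevel set $\{x : g(x)\ge -t\}$.

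Next I would produce the segments. For each $i$ set $\ell_i^{\pm}(t)=\sup\{s\ge 0 : g(\pm s\,e_i)\ge -t\}$. The endpoints $(\ell_i^{+}(t)e_i,-t)$ and $(-\ell_i^{-}(t)e_i,-t)$ lie in $B_t$, and since $B_t$ is convex and contains $(0,-t)$, it contains the whole segment joining them, which is parallel to $e_i$ and has length $\ell_i^{+}(t)+\ell_i^{-}(t)$. The $d-1$ segments obtained this way point along the mutually orthogonal directions $e_1,\dots,e_{d-1}$. Setting $\bt_t=\min_{i}\bigl(\ell_i^{+}(t)+\ell_i^{-}(t)\bigr)$, the set $B_t$ contains $d-1$ mutually orthogonal segments of length $\bt_t$.

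It remains to check that $\bt_t/t\to\infty$, for which it suffices to show $\ell(t,v)/t\to\infty$ for each fixed direction $v\in\{\pm e_1,\dots,\pm e_{d-1}\}$, where $\ell(t,v)=\sup\{s\ge0:g(sv)\ge -t\}$. The function $\gamma(s)=g(sv)$ is concave with $\gamma(0)=0$ and right derivative $\gamma'(0^+)=\langle \nabla g(0),v\rangle=0$; hence the difference quotient $\gamma(s)/s$ is nonincreasing and tends to $0$ as $s\to 0^+$, that is, $\gamma(s)=o(s)$. If the top face of $C$ is flat in the direction $v$, then $\ell(t,v)$ is bounded below by a positive constant and the ratio trivially diverges; otherwise $\ell(t,v)\to 0$ as $t\to 0$ and $\gamma(\ell(t,v))=-t$, so $t/\ell(t,v)=-\gamma(\ell(t,v))/\ell(t,v)\to 0$. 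In either case $\ell(t,v)/t\to\infty$, and taking the minimum over the finitely many directions gives $\bt_t/t\to\infty$.

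The only genuinely substantive step is the passage from regularity to the sublinear estimate $g(sv)=o(s)$: it is exactly the differentiability $\nabla g(0)=0$ forced by the uniqueness of the supporting hyperplane that makes the cross-section $B_t$ widen faster than its depth $t$. Everything else---the reduction to the concave graph, the interiority of $0$ in $D$, and the convexity argument giving the orthogonal segments---is routine, and the possible flatness of the top face is absorbed by the trivial sub-case above.
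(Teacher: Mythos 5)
Your argument is correct and follows essentially the same route as the paper: you produce the $d-1$ orthogonal segments as sections of $B_t$ by the coordinate $2$-planes through $r_0$ and deduce the superlinear widening from regularity, exactly as in the paper's Proposition~\ref{utv4}. The only difference is cosmetic---the paper phrases the key step as ``$r_0$ is a regular point of the planar section $C'$, hence $\bt'_t/t\to\infty$'' and leaves it at that, whereas you actually prove this planar fact via the concave graph function $g$, differentiability at $0$, and the monotone difference quotient (just take care to truncate your segments to a fixed neighborhood of the origin where $u_1<-t$, so that the superlevel set of $g$ really sits inside $B_t$; this only matters in the flat sub-case, where the conclusion is trivial anyway).
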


\begin{proof}
Take a unit vector $e'$ orthogonal to $e$ and consider the 2-dimensional plane $\Pi'$ through $r_0$ parallel to $e$ and $e'$. The intersection $\Pi' \cap C =: C'$ is a 2-dimensional convex body, and $r_0$ is a regular point on its boundary; the intersection $\Pi' \cap \Pi_t = l'_t$ is a line orthogonal to $e$ at the distance $t$ from $r_0$; and the intersection $\Pi' \cap B_t$ is a line segment (maybe degenerating to a point or the empty set). Equivalently, this segment is the intersection of the body $C'_t$ with the line $l'_t$. Since the point $r_0 \in \pl C'$ is regular, we conclude that the length of this segment $\bt'_t$ satisfies  $\bt'_t/t \to \infty$ as $t \to 0$.

Now choose unit vectors $e_1, \ldots, e_{d-1}$ in such a way that the set of vectors $e_1, \ldots, e_{d-1}, e$ forms an orthonormal system in $\RRR^d$. For each $i = 1, \ldots, d-1$ draw the 2-dimensional plane $\Pi^i$ through $r_0$ parallel to $e$ and $e_i$. The intersections $\Pi^i \cap B_t$ are line segments parallel to $e_i$, and therefore, they are mutually orthogonal. The lengths of these segments $\bt^i_t$ satisfy  $\bt^i_t/t \to \infty$ as $t \to 0$. Taking $\bt_t = \min_{1\le i\le d-1} \bt^i_t$, one comes to the statement of the proposition.
\end{proof}

Recall that $S_t$ is the intersection of $\pl C$ with the half-space $\{ r :\, \langle r-r_0,\, e \rangle \ge -t \}$ and $\Pi_t$ is the plane of the equation $\langle {r} - {r}_0,\, e \rangle = - t$. For $\vphi \in (0,\, \pi/2)$ denote by $S_{t,\vphi}$ the part of $S_t$ containing the regular points $r$ satisfying $\langle n_r,\, e \rangle \le \cos\vphi$. In other words, $S_{t,\vphi}$ is the set of regular points $r$ in $S_t$ such that the angle between $e$ and $n_r$ is greater than or equal to $\vphi$.

\begin{utv}\label{utv5}
We have
$$
\frac{|S_{t,\vphi}|}{|B_t|} \to 0 \quad \text{as} \ \, t \to 0.
$$
\end{utv}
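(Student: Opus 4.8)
The plan is to combine Propositions \ref{utv1}--\ref{utv4} along a single chain, after isolating the one genuinely new estimate. Set $w=\langle r-r_0,e\rangle$, so that $C_t=\{-t\le w\le0\}\cap C$ sits between the supporting plane $\{w=0\}$ and the cutting plane $\Pi_t=\{w=-t\}$; this is exactly the configuration of Proposition \ref{utv2} (up to an affine change of the last coordinate). Let $D_t$ be the projection of $C_t$ onto $\Pi$, and put $A_t=|D_t|$ and $P_t=|\pl D_t|_{d-2}$. The point to watch is that Proposition \ref{utv2} controls only regular points with nearly horizontal normal, $|\langle n_r,e\rangle|\le\cos\vphi$, whereas $S_{t,\vphi}$ is defined by the one-sided condition $\langle n_r,e\rangle\le\cos\vphi$ and may also contain downward normals. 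I would therefore split $S_{t,\vphi}=S_{t,\vphi}^{+}\cup S_{t,\vphi}^{-}$, with $S_{t,\vphi}^{+}=\{r:|\langle n_r,e\rangle|\le\cos\vphi\}$ and $S_{t,\vphi}^{-}=\{r:\langle n_r,e\rangle<-\cos\vphi\}$, and treat the two pieces separately.

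The heart of the matter, and the step I expect to be the main obstacle, is the comparison $A_t/|B_t|\to1$ as $t\to0$ (equivalently $A_t-|B_t|=o(|B_t|)$). To prove it I would argue as follows. For a fixed horizontal unit vector $u$ let $h_s(u)=\max\{\langle x,u\rangle:(x,-s)\in C\}$ be the support number of the section at depth $s$; convexity of $C$ makes $s\mapsto h_s(u)$ concave on the whole range $[0,H]$ of depths, $H$ being the height of $C$. Since $D_t$ is the union of the sections at depths $0\le s\le t$, its support function is $\max_{0\le s\le t}h_s(u)$, and a one-dimensional concavity estimate gives, uniformly in $u$, $\max_{0\le s\le t}h_s(u)-h_t(u)\le\frac{t}{H-t}\bigl(h_t(u)-h_H(u)\bigr)=O(t)$. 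Hence $D_t\subseteq B_t+\bar B(0,\rho_t)$ with $\rho_t=O(t)$. By Propositions \ref{utv4} and \ref{utv3}, $B_t$ contains a ball of radius $a_t=\bt_t/(2(d-1))$, and $\rho_t/a_t=O(t/\bt_t)\to0$; inflating $B_t$ by a ball small relative to its inradius multiplies the area by at most $(1+\rho_t/a_t)^{d-1}$, which proves $A_t/|B_t|\to1$.

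With this lemma in hand the lateral part is immediate: Proposition \ref{utv2} applies to $S_{t,\vphi}^{+}$ and gives $|S_{t,\vphi}^{+}|\le 2tP_t/\sin\vphi$, while Proposition \ref{utv1}, applied to the ball of radius $a_t$ inside $D_t$, gives $P_t\le(d-1)A_t/a_t$. Combining, $\frac{|S_{t,\vphi}^{+}|}{|B_t|}\le\frac{4(d-1)^2}{\sin\vphi}\,\frac{t}{\bt_t}\,\frac{A_t}{|B_t|}\to0$, since $t/\bt_t\to0$ by Proposition \ref{utv4} and $A_t/|B_t|\to1$ by the lemma.

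Finally the downward part $S_{t,\vphi}^{-}$ lies on the lower, convex graph bounding $C_t$ from below, so it projects injectively onto the region of $D_t$ over which this graph stays above level $-t$; that region is precisely $D_t\setminus\mathrm{proj}\,B_t$, of area $A_t-|B_t|$. On $S_{t,\vphi}^{-}$ one has $|\langle n_r,e\rangle|>\cos\vphi$, so the surface area exceeds its projected area by a factor at most $1/\cos\vphi$, giving $|S_{t,\vphi}^{-}|\le(A_t-|B_t|)/\cos\vphi$ and hence $\frac{|S_{t,\vphi}^{-}|}{|B_t|}\le\frac{1}{\cos\vphi}\,\frac{A_t-|B_t|}{|B_t|}\to0$, again by the key lemma. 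Adding the two estimates yields $|S_{t,\vphi}|/|B_t|\to0$. Thus the whole argument rests on the concavity-based comparison $A_t\le(1+o(1))|B_t|$, which both feeds Proposition \ref{utv1} for $S_{t,\vphi}^{+}$ and bounds the projected area controlling $S_{t,\vphi}^{-}$.
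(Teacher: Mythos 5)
Your argument is correct, and it reaches the conclusion by a route that differs from the paper's in two respects. First, you are right to flag that the defining condition of $S_{t,\vphi}$ is one-sided, so Proposition \ref{utv2} does not apply to it directly; the paper resolves this not by splitting the set but by observing that once $\Pi_{t_0}$ cuts the interior of $C$, every regular point of $S_t$ with $t\le t_0$ satisfies $\langle n_r, e\rangle \ge -\cos\vphi_0$ for a fixed $\vphi_0>0$, and then taking $\vphi<\vphi_0$ without loss of generality, which makes your set $S_{t,\vphi}^{-}$ empty. Your alternative --- bounding $|S_{t,\vphi}^{-}|$ by $(A_t-|B_t|)/\cos\vphi$ via the injective projection onto $D_t\setminus B_t$ --- is valid and self-contained, at the cost of the extra lemma $A_t/|B_t|\to1$. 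Second, for the lateral part the paper bounds $P_t=|\pl D_t|_{d-2}$ by $|\pl B_t|_{d-2}+s_{d-2}(t\cot\vphi)^{d-2}$, using that $D_t$ lies in a $(t\cot\vphi)$-neighborhood of $B_t$ (a consequence of the same gradient bound $|\nabla u_1|\ge\tan\vphi$), and then applies Proposition \ref{utv1} to $B_t$; you instead apply Proposition \ref{utv1} to $D_t$ and feed in $A_t/|B_t|=O(1)$. Your key lemma is proved correctly: the concavity of $s\mapsto h_s(u)$ gives $D_t\subseteq B_t+\bar B(0,Ct)$ uniformly in $u$, and the dilation bound $(1+\rho_t/a_t)^{d-1}$ together with $t/\bt_t\to0$ from Propositions \ref{utv3} and \ref{utv4} yields $A_t=(1+o(1))|B_t|$. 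In short, your proof is a touch longer but entirely rigorous, and the concavity lemma is a clean quantitative substitute for both of the paper's ad hoc geometric observations.
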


\begin{proof}
Consider a coordinate system $(x,z)$, $x = (x_1, \ldots, x_{d-1})$ such that the $x$-plane coincides with $\Pi_t$ and the $z$-axis is directed toward the vector $e$. For $t_0 > 0$ sufficiently small, the intersection of $\Pi_t$ and the interior of $C$ is nonempty for all $t \le t_0$. The angle between $-e$ and the outward normal at each regular point of $S_t$,\, $t \le t_0$ is greater that a positive value $\vphi_0$. That is, for any regular point $r \in S_t$ holds $\langle n_r,\, e \rangle \ge -\cos\vphi_0$. Without loss of generality one can take $\vphi < \vphi_0$, and then for all regular points $r \in S_{t,\vphi}$ holds $|\langle n_r,\, e \rangle| \le \cos\vphi$.

In the chosen coordinate system $C_t$ is contained between the planes $z = 0$ and $z = t$. Denote by $D_t$ the image of $C_t$ under the natural projection $(x,z) \mapsto x$. The domain $D_t$ contains $B_t$ and is contained in the $(t\cot\vphi)$-neighborhood of $B_t$, hence its $(d-2)$-dimensional volume does not exceed $P_t = |\pl B_t|_{d-2} + s_{d-2} (t \cot\vphi)^{d-2}$, where $ s_{d-2} = |S^{d-2}|_{d-2}$ means area of the $(d-2)$-dimensional unit sphere.

Applying Proposition \ref{utv2} to the body $C = C_t$ and the domain $\UUU = S_{t,\vphi}$, one obtains
$$
|S_{t,\vphi}|  \le \frac{2t P_t}{\sin\vphi} = 2t\, \frac{|\pl B_t|_{d-2} + s_{d-2} (t \cot\vphi)^{d-2}}{\sin\vphi}.
$$
By Propositions \ref{utv3} and \ref{utv4}, $B_t$ contains a ball of radius $c\bt_t$, and therefore, by Proposition \ref{utv1},
$$
|\pl B_t|_{d-2} \le \frac{d-1}{c\bt_t}\, |B_t|
$$
and, additionally, $|B_t| \ge b_{d-1} (c\bt_t)^{d-1}$, where $b_{d-1}$ means the volume of the unit ball in $\RRR^{d-1}$. Hence
$$
\frac{|S_{t,\vphi}|}{|B_t|} \le 2t \frac{\frac{d-1}{c\bt_t}\, |B_t| + s_{d-2} (t \cot\vphi)^{d-2}}{\sin\vphi |B_t|} \le
\frac{2(d-1)}{c\sin\vphi}\, \frac{t}{\bt_t} + \frac{2s_{d-2}}{c\sin\vphi\, b_{d-1}}\, \frac{t}{\bt_t} \to 0 \quad \text{as} \ \, t \to 0.
$$
\end{proof}

Let us now finish the proof of Theorem \ref{t1}.

Recall that $\nu_S$ is the surface area measure induced by $S$. For all $\vphi \in (0,\, \pi/2)$ one has
$$
\nu_t = \frac{1}{|B_t|}\, \nu_{S_{t,\vphi}} + \frac{1}{|B_t|}\, \nu_{S_{t} \setminus S_{t,\vphi}}.
$$
Proposition \ref{utv5} implies that the measure $\frac{1}{|B_t|}\, \nu_{S_{t,\vphi}}$ converges to 0 as $t \to 0$. Indeed, for any continuous function $f$ on $S^{d-1}$, $$
\int_{S^{d-1}} f(n)\, \frac{1}{|B_t|}\, \nu_{S_{t,\vphi}}(dn) \le \max |f|\, \frac{|S_{t,\vphi}|}{|B_t|} \to 0 \quad \text{as} \ \, t \to 0.
$$
On the other hand, the measure $\frac{1}{|B_t|}\, \nu_{S_{t} \setminus S_{t,\vphi}}$ is supported in the set in $S^{d-1}$ containing all points whose radius vector forms the angle $\le \vphi$ with $e$. It follows that each partial limit of $\frac{1}{|B_t|}\, \nu_{S_{t} \setminus S_{t,\vphi}}$, and therefore, each partial limit of $\nu_t$, are supported in this set. Since $\vphi > 0$ can be made arbitrary small, one concludes that each partial limit of $\nu_t$ is proportional to $\del_e$. Finally, utilizing equality \eqref{cent} true for each partial limit $\nu_*$, one concludes that the limit of $\nu_t$ exists and is equal to $\del_e$.

\section{Proof of Theorem \ref{t2}}\label{sec_T2}

In the proof we will use the well-known fact that the surface area measure is continuous with respect to the Hausdorff topology in the space of convex bodies.

More precisely, we say that a family of convex bodies $C_t$, $t > 0$ in $\RRR^d$ converges to a convex body $C \subset \RRR^d$ as $t \to 0$ in the sense of Hausdorff, and write $C_t \xrightarrow[t\to0]{} C$, if for any $\ve > 0$ there exists $t_0 > 0$ such that for all $t \le t_0$, $C_t$ is contained in the $\ve$-neighborhood of $C$ and $C$ is contained in the $\ve$-neighborhood of $C_t$.

It is well known that if $C_t \xrightarrow[t\to0]{} C$, then $\nu_{\pl C_t} \to \nu_{\pl C}$ as $t \to 0$.

Choose $\sigma > 0$ so as $|\hat B_{\sigma}| = 1$, and therefore,
\beq\label{nu}
\nu_{\hat S_{\sigma}} = \nu_\star.
\eeq

Let the origin coincide with the point $r_0$, that is, $r_0 = \vec 0$; then the homothety of a set $\mathcal{A}$ with the center at $r_0$ and ratio $k$ is $k\mathcal{A}$. See Fig.~\ref{figL}.

         \begin{figure}[h]
\centering
\hspace*{6mm}
\includegraphics[scale=0.2]{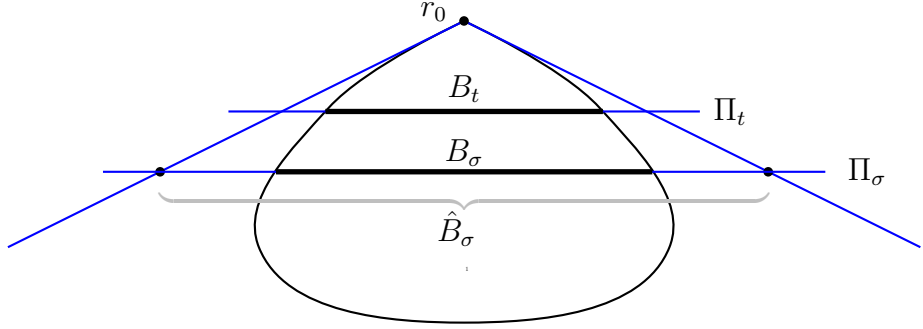} 

\caption{The tangent cone at $r_0$, the cutting planes $\Pi_t$ and $\Pi_\sigma$, and the sets $B_t$, $B_\sigma$, and $\hat B_\sigma$ in the case when the point $r_0$ is conical.}
\label{figL}
\end{figure}

\begin{utv}\label{utv2-1}
$\frac{\sigma}{t} B_t \xrightarrow[t\to0]{} \hat B_{\sigma}$.
\end{utv}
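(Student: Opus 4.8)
The plan is to place the vertex at the origin, $r_0 = \vec 0$, and to reduce the statement to the convergence of a \emph{nested increasing} family of convex bodies living in the single fixed plane $\Pi_\sigma$, for which Hausdorff convergence to the closure of their union is essentially automatic (modulo boundedness of the limit).

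\emph{Reduction to a nested family.} The homothety $r \mapsto \frac{\sigma}{t}\,r$ sends a point of $\Pi_t$ (where $\langle r, e\rangle = -t$) to a point with $\langle \frac{\sigma}{t}r, e\rangle = -\sigma$, hence it maps $\Pi_t$ onto $\Pi_\sigma$; therefore $\frac{\sigma}{t}B_t = \big(\frac{\sigma}{t}C\big)\cap \Pi_\sigma$. Writing $\lambda = \sigma/t$, so that $\lambda \to +\infty$ as $t\to 0$, I would first note that, since $r_0 = \vec 0 \in C$ and $C$ is convex, $\lambda_1 C \subseteq \lambda_2 C$ whenever $0 < \lambda_1 \le \lambda_2$. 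Consequently the convex bodies $D_t := \frac{\sigma}{t}B_t = (\lambda C)\cap\Pi_\sigma$, all contained in the fixed plane $\Pi_\sigma$, form a nested increasing family as $t$ decreases to $0$.

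\emph{Identifying the limit.} By definition of the tangent cone, $K = \mathrm{cl}\big(\bigcup_{\lambda>0}\lambda C\big)$. Since $\bigcup_{\lambda>0}\lambda C$ is convex with nonempty interior, $\mathrm{int}\,K = \mathrm{int}\big(\bigcup_{\lambda>0}\lambda C\big) \subseteq \bigcup_{\lambda>0}\lambda C$. The plane $\Pi_\sigma$ meets $\mathrm{int}\,K$, so $\mathrm{relint}\,\hat B_\sigma = (\mathrm{int}\,K)\cap\Pi_\sigma$, and intersecting the previous chain with $\Pi_\sigma$ gives $\mathrm{relint}\,\hat B_\sigma \subseteq \bigcup_t D_t \subseteq \hat B_\sigma$. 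Taking closures and using $\mathrm{cl}(\mathrm{relint}\,\hat B_\sigma) = \hat B_\sigma$ yields $\mathrm{cl}\big(\bigcup_t D_t\big) = \hat B_\sigma$.

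\emph{From increasing union to Hausdorff convergence, and the main obstacle.} Because $D_t \subseteq \hat B_\sigma$ for every $t$, the inclusion of $D_t$ in any neighbourhood of $\hat B_\sigma$ is immediate; for the opposite direction one uses that $\bigcup_t D_t$ is dense in the compact set $\hat B_\sigma$ together with the nesting: given $\ve > 0$, covering $\hat B_\sigma$ by finitely many $\ve$-balls, each centred near a point already contained in some $D_t$, shows that $\hat B_\sigma$ lies in the $\ve$-neighbourhood of $D_t$ for all sufficiently small $t$. This gives $D_t \xrightarrow[t\to0]{} \hat B_\sigma$. The step requiring the most care is the boundedness, hence compactness, of $\hat B_\sigma$, without which bounded sets could not converge to it: this holds exactly when $e$ lies in the interior of the normal cone $N(r_0)$, equivalently when $\langle v, e\rangle < 0$ for every nonzero $v$ of the pointed cone $K$, so that the section $K\cap\Pi_\sigma$ has no recession direction. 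I expect this boundedness, and the careful bookkeeping with relative interiors and closures in the preceding paragraph, to be the only genuinely delicate points; the remainder is the soft monotonicity argument.
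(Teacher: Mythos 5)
Your argument is correct and shares its skeleton with the paper's proof: both reduce the statement to the observation that the sets $D_t=\frac{\sigma}{t}B_t$ form a nested increasing family of convex subsets of $\hat B_\sigma$ (living in the fixed plane $\Pi_\sigma$) whose union is dense in $\hat B_\sigma$. Where you genuinely differ is in how that density is established. The paper argues by contradiction: if $\tilde B_\sigma=\mathrm{cl}\big(\bigcup_t D_t\big)$ were a proper subset of $\hat B_\sigma$, the cone it generates would be a cone containing $C$ strictly smaller than the tangent cone $K$, contradicting the minimality of $K$. You instead identify the union directly, from $K=\mathrm{cl}\big(\bigcup_{\lambda>0}\lambda C\big)$ together with the standard facts $\mathrm{int}(\mathrm{cl}\,U)=\mathrm{int}\,U$ for convex $U$ with nonempty interior and $\mathrm{relint}(K\cap\Pi_\sigma)=(\mathrm{int}\,K)\cap\Pi_\sigma$, squeezing $\bigcup_t D_t$ between $\mathrm{relint}\,\hat B_\sigma$ and $\hat B_\sigma$. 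This is arguably cleaner: it avoids the auxiliary cone $\tilde K$ and the verification (left unstated in the paper) that $\tilde K$ is closed, which is what the minimality of the tangent cone, defined as the smallest \emph{closed} cone containing $C$, actually requires for the contradiction. The boundedness of $\hat B_\sigma$ that you flag is a real issue, but one you share with the paper rather than a gap of your own: it fails exactly when $e\in\pl N(r_0)$, in which case $|\hat B_\sigma|=\infty$ and the normalization defining $\nu_\star$ already breaks down, so it must be read as an implicit standing hypothesis of Theorem \ref{t2} (automatic when $e$ lies in the interior of the normal cone). Granting it, your covering argument passing from a dense increasing union to Hausdorff convergence is sound, and the paper relies on the same implication without comment.
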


\begin{proof}
Note that for all positive $t_1$ and $t_2$,
$$
\frac{t_1}{t_2}\, \Pi_{t_2} = \Pi_{t_1} \quad \text{and} \quad \frac{t_1}{t_2}\, \hat B_{t_2} = \hat B_{t_1}.
$$
Additionally, since the tangent cone $K$ contains $C$ then $\hat B_t$ contains $B_t$, and so,
$$
\frac{\sigma}{t}\, B_t \subset \frac{\sigma}{t}\, \hat B_t = \hat B_\sigma.
$$

Let now $0 < t_1 \le t_2$. Since $\vec 0$ and $B_{t_2}$ belong to $C$, so does their linear combination,
$$
\frac{t_1}{t_2}\, B_{t_2} = \Big( 1 - \frac{t_1}{t_2} \Big) \vec 0 + \frac{t_1}{t_2}\, B_{t_2} \subset C.
$$
On the other hand, $\frac{t_1}{t_2}\, B_{t_2} \subset \frac{t_1}{t_2}\, \Pi_{t_2} = \Pi_{t_1}$. It follows that $\frac{t_1}{t_2}\, B_{t_2} \subset C \cap \Pi_{t_1} = B_{t_1}$. We conclude that
$$
\frac{\sigma}{t_2}\, B_{t_2} \subset \frac{\sigma}{t_1}\, B_{t_1},
$$
that is, $\frac{\sigma}{t} B_t,\, t > 0$ form a nested family of sets contained in $\hat B_{\sigma}$.

Suppose that $\frac{\sigma}{t} B_t$ does not converge to $\hat B_{\sigma}$. This implies that the closure of the union
$$
\overline{\bigcup_{t>0}\, \frac{\sigma}{t} B_t} =: \tilde B_{\sigma}
$$
is contained in, but does not coincide with, $\hat B_{\sigma}$.

The union
$$
\bigcup\limits_{t>0}\, \frac{t}{\sigma} \tilde B_{\sigma} =: \tilde K
$$
is a cone with the vertex at $r_0$; it is contained in the tangent cone $K$, but does not coincide with it. On the other hand,
$$
C = \bigcup_{t\ge0}\, B_t \subset \bigcup_{t\ge0}\, \frac{t}{\sigma} \tilde B_{\sigma} = \tilde K;
$$
that is, $C$ is contained in the cone $\tilde K$, which is smaller than the tangent cone $K$. This contradiction proves our proposition.
\end{proof}

From Proposition \ref{utv2-1} follows, in particular, that
\beq\label{lim2}
\lim_{t\to0} \Big|\frac{\sigma}{t} B_t\Big| = |\hat B_{\sigma}| = 1,
\eeq
and therefore,
\beq\label{lim3}
\nu_{\frac{\sigma}{t} B_t} = \Big|\frac{\sigma}{t} B_t\Big| \del_{-e} \longrightarrow |\hat B_{\sigma}| \del_{-e} = \nu_{\hat B_{\sigma}}
\quad \text{as} \ \, t \to 0.
\eeq

Denote
$$
\Sig^t_\sigma := \text{conv}\Big(\frac{\sigma}{t}\, B_t \cup r_0\Big).
$$
Since the convex body $\frac{\sigma}{t}\, C_t$ contains both $r_0$ and $\frac{\sigma}{t}\, B_t$, we have $\Sig^t_\sigma \subset \frac{\sigma}{t}\, C_t.$

Recall that $K_\sigma$ is the part of the tangent cone cut off by the plane $\Pi_\sigma$. We have $K_{\sigma} = \text{conv}(\hat B_{\sigma} \cup r_0)$. Since by Proposition \ref{utv2-1}, $\frac{\sigma}{t} B_t \xrightarrow[t\to0]{} \hat B_{\sigma}$, we conclude that $\text{conv}\big(\frac{\sigma}{t}\, B_t \cup r_0\big) \xrightarrow[t\to0]{} \text{conv}(\hat B_{\sigma} \cup r_0)$, that is,
$$
\Sig^t_\sigma \xrightarrow[t\to0]{} K_{\sigma}.
$$
Using this relation and the double inclusion
$$
\Sig^t_\sigma \subset \frac{\sigma}{t}\, C_t \subset K_\sigma,
$$
one concludes that $\frac{\sigma}{t}\, C_t$ converges to $K_\sigma$ in the sense of Hausdorff, and therefore,
$$
\nu_{\frac{\sigma}{t}\pl C_t} \to \nu_{\pl K_\sigma} \quad \text{as} \quad t \to 0.
$$
Using that $\frac{\sigma}{t}\pl C_t = \frac{\sigma}{t} S_t \cup \frac{\sigma}{t} B_t$ and $\pl K_\sigma = \hat S_\sigma \cup \hat B_\sigma$ and using \eqref{lim3}, one obtains
$$
\nu_{\frac{\sigma}{t} S_t} \to \nu_{\hat S_\sigma} \quad \text{as} \quad t \to 0,
$$
and taking account of \eqref{lim2}, one gets
$$
\lim_{t\to0} \nu_t = \lim_{t\to0} \frac{1}{|B_t|}\, \nu_{S_t} =
\frac{1}{\lim_{t\to0}\big|\frac{\sigma}{t}B_t\big|}\, \lim_{t\to0} \nu_{\frac{\sigma}{t}S_t}  = \nu_{\hat S_\sigma} = \nu_\star.
$$
Theorem \ref{t2} is proved.

\section*{Acknowledgements}

This work was supported by the Center for Research and Development in Mathematics and Applications (CIDMA) through the Portuguese Foundation for Science and Technology (FCT), within projects UIDB/04106/2020 and UIDP/04106/2020.

\end{document}